\documentclass[12pt]{article}
\usepackage{graphicx} 
\usepackage{amsmath} 
\usepackage{amssymb} 
\usepackage{amsfonts} 
\usepackage{amsthm} 
\usepackage{longtable} 
\usepackage{comment}

\usepackage{color}
\usepackage{subfig}
\input{epsf}
\input xy
\xyoption{all}
\usepackage{latexsym}
\newtheorem{lemma}{Lemma}[section] 
\newtheorem{theorem}[lemma]{Theorem}

%

\topmargin-0.5cm 
\footskip1cm 
\oddsidemargin0.5cm 
\evensidemargin0cm 
\textwidth16cm 
\textheight21cm 
\vfuzz1pc 
\hfuzz1pc



\renewcommand{\epsilon}{\varepsilon} 
\renewcommand{\bar}{\overline} 
\renewcommand{\hat}{\widehat} 

\renewcommand{\leq}{\leqslant} 
\renewcommand{\geq}{\geqslant}


\newcommand{\Po}{\mathcal{P}}
\newcommand{\LL}{\mathcal{L}}

\newcommand{\F}{\mathcal{F}}

\newcommand{\K}{\mathcal{K}}

\begin{document}


\title{Cube-Like Polytopes and Complexes}
 
\author{Andrew Duke\\
Northeastern University \\
Boston, Massachussetts,  USA, 02115
\and and\\[.05in]
Egon Schulte\thanks{Supported by NSF-grant DMS--0856675}\\
Northeastern University\\
Boston, Massachussetts,  USA, 02115}

\date{ \today }
\maketitle

\begin{abstract}
\noindent
The main purpose of this paper is to popularize Danzer's power complex construction and establish some new results about covering maps between two power complexes. Power complexes are cube-like combinatorial structures that share many structural properties with higher-dimensional cubes and cubical tessellations on manifolds. Power complexes that are also abstract polytopes have repeatedly appeared somewhat unexpectedly in various contexts, although often under a different name. However, the non-polytope case is largely unexplored.
\bigskip\medskip

\noindent
{\it Key Words\/}:  cube; cubical tessellation; abstract polytope; incidence complex; covering.

\medskip
\noindent
{\it AMS Subject Classification (2000)\/}:\  Primary: 51M20.  Secondary: 52B15. 

\end{abstract}

\section{Introduction}

Combinatorial structures built from cubes or cube-like elements have attracted a lot of attention in geometry, topology, and combinatorics. In this paper we study a particularly interesting class of cube-like structures known as {\em power complexes\/}. These power complexes were first discovery by Danzer in the early 1980's (see \cite{dan,arp,esext}). Power complexes that are also abstract polytopes have repeatedly appeared somewhat unexpectedly in various contexts, although often under a different name; for example, see Coxeter~\cite{crsp}, Effenberger-K\"uhnel~\cite{eff}, K\"uhnel~\cite{kuet}, McMullen-Schulte~\cite[Ch. 8]{arp} and Ringel~\cite{ri}. However, most power complexes are not abstract polytopes, and have not been very well researched.

The main purpose of this paper is to popularize Danzer's power complex construction and establish some new results about covering maps between power complexes. Our discussion is in terms of incidence complexes, a class of ranked incidence structures closely related to polytopes, ranked partially ordered sets, and incidence geometries (Danzer-Schulte~\cite{kom1,kom2}). In Section~\ref{bano} we begin by reviewing key facts about incidence complexes and their automorphism groups. Then in Sections~\ref{pow} we define power complexes and establish some of their basic properties. A number of applications of power complexes are summarized in Section~\ref{app}. Finally, Section~\ref{quocov} describes fairly general circumstances that guarantee the existence of covering maps between two power complexes.

\section{Incidence complexes}
\label{bano}

An incidence complex has some of the key combinatorial properties of the face lattice of a convex polytope; in general, however, an incidence complex need not be a lattice, need not be finite, need not be an abstract polytope, and need not admit any familiar geometric realization. The notion of an {\em incidence complex\/} is originally due to Danzer~\cite{dan,kom1} and was inspired by Gr\"unbaum~\cite{grgcd}. Incidence complexes can also be viewed as incidence geometries or diagram geometries with a linear diagram (see Buekenhout-Pasini~\cite{bup}, Leemans~\cite{leem}, Tits~\cite{tib}), although here we study them from the somewhat different discrete geometric and combinatorial perspective of polytopes and ranked partially ordered sets.

Following Danzer-Schulte~\cite{kom1} (and \cite{kom2}), an {\em incidence complex $\K$ of rank $k$\/}, or briefly a {\em $k$-complex\/}, is defined by the properties \textbf{(I1)},\ldots,\textbf{(I4)} below. The elements of $\K$ are called {\em faces\/} of $\K$.\medskip

\noindent\textbf{(I1)}\  $\K$ is a partially ordered set with a unique least face and a unique greatest face.
\medskip

\noindent\textbf{(I2)}\  Every totally ordered subset of $\K$ is contained in a (maximal) totally ordered subset with exactly $k+2$ elements, a {\em flag\/}, of $\K$.
\medskip

These two conditions make $\K$ into a ranked partially ordered set, with a strictly monotone rank function with range 
$\{-1,0,\ldots,k\}$. A face of rank $i$ is called an $i$-\textit{face}; often $F_i$ will indicate an $i$-face. The least face and greatest face are the {\em improper faces\/} of $\K$ and have ranks $-1$ and $k$, respectively; all other faces of $\K$ are {\em proper faces\/} of $\K$. A face of rank $0$, $1$ or $n-1$ is also called a {\em vertex\/}, an {\em edge\/} or a {\em facet}, respectively. We let $\F(\K)$ denote the set of flags of $\K$.
\medskip

\noindent\textbf{(I3)}\  $\K$ is {\em strongly flag-connected\/}, meaning that if $\Phi$ and $\Psi$ are two flags of $\K$, then there is a finite sequence of flags $\Phi=\Phi_0,\Phi_1,\ldots,\Phi_{m-1},\Phi_{m}=\Psi$, all containing $\Phi\cap\Psi$, such that successive flags are {\em adjacent\/} (differ in just one face).
\medskip

Call two flags {\em $i$-adjacent\/}, for $i=0,\ldots,k-1$, if they are adjacent and differ exactly in their $i$-faces. With this notion of adjacency, $\F(\K)$ becomes the \textit{flag graph} for $\K$ and acquires a natural edge-labelling where edges labelled $i$ represent pairs of $i$-adjacent flags. 

Our last defining condition is a homogeneity requirement for the numbers of $i$-adjacent flags for each $i$.
\medskip

\noindent\textbf{(I4)}\  There exist cardinal numbers $c_0,\ldots,c_{k-1}\geq 2$, for our purposes taken to be finite, such that, whenever $F$ is an $(i-1)$-face and $G$ a $(i+1)$-face with $F < G$, the number of $i$-faces $H$ with $F<H<G$ equals $c_i$.
\medskip

If $F$ is an $i$-face and $G$ a $j$-face with $F < G$, we call
\[ G/F := \{ H \in \K \, | \, F \leq H \leq G \} \]
a \textit{section} of $\K$. It follows that $G/F$ is an incidence complex in its own right, of rank $j-i-1$ and with cardinal numbers $c_{i+1},\ldots,c_{j-1}$. It is useful to identify a $j$-face $G$ of $\K$ with the $j$-complex $G/F_{-1}$. Likewise, if $F$ is an $i$-face, the $(k-i-1)$-complex $F_k/F$ is the {\em co-face\/} of $F$ in $\K$; if $F$ is a vertex (and $i=0$), this is also called the \textit{vertex-figure} at $F$.

An {\em abstract $k$-polytope\/}, or simply {\em $k$-polytope\/}, is an incidence complex of rank $k$ such that $c_i=2$ for $i=0,\ldots,k-1$ (see McMullen-Schulte~\cite{arp}). Thus a polytope is a complex in which every flag has precisely one $i$-adjacent flag for each $i$. For polytopes, the last condition (I4) is also known as the {\em diamond condition\/}. 

The \textit{automorphism group} $\Gamma(\K)$ of an incidence complex $\K$ consists of all order-preserving bijections of $\K$. We say that $\K$ is \textit{regular} if $\Gamma(\K)$ is transitive on the flags of $\K$. Note that a regular complex need not have a simply flag-transitive automorphism group (in fact, $\Gamma(\K)$ may not even have a simply flag-transitive subgroup), so in general $\Gamma(\K)$ has nontrivial flag-stabilizer subgroups. However, the group of a regular polytope is always simply flag-transitive.

It was shown in \cite{kom2} (for a proof for polytopes see also \cite[Ch. 2]{arp}) that the group $\Gamma:=\Gamma(\K)$ of a regular $k$-complex $\K$ has a well-behaved system of generating subgroups. Let $\Phi:=\{F_{-1},F_0,\ldots,F_k\}$ be a fixed, or {\em base flag\/}, of $\K$, where $F_i$ designates the $i$-face in $\Phi$ for each $i$. For each $\Omega\subseteq \Phi$ let $\Gamma_\Omega$ denote the stabilizer of $\Omega$ in $\Gamma$. Then $\Gamma_\Phi$ is the stabilizer of the base flag $\Phi$, and $\Gamma_\emptyset = \Gamma$. Moreover, for $i=-1,0,\ldots,k$ set 
\[ R_{i} :=  \Gamma_{\Phi\setminus\{F_i\}} = \langle \varphi\in \Gamma \mid F_j\varphi =F_j \mbox{ for all } j\neq i\rangle .\]
Then each $R_i$ contains $\Gamma_\Phi$, and coincides with $\Gamma_\Phi$ when $i=-1$ or $k$; in particular,
\begin{equation}
\label{ci}
c_i := |R_{i}:\Gamma_\Phi | \quad\; (i=0,\ldots,k-1). 
\end{equation}
Moreover, these subgroups have the following commutation property:
\begin{equation}
\label{commu}
R_i \cdot R_j = R_j \cdot R_i \qquad (-1\leq i < j-1 \leq k-1). 
\end{equation}
Note here that $R_i$ and $R_j$ commute as subgroups, not generally at the level of elements.

The groups $R_{-1},R_0,\ldots,R_{k}$ form a {\em distinguished system of generating subgroups\/} of $\Gamma$, that is,
\begin{equation}
\label{genga}
\Gamma = \langle R_{-1},R_0,\ldots,R_{k} \rangle .
\end{equation}
Here the subgroups $R_{-1}$ and $R_{k}$ are redundant when $k>0$.  More generally, if $\Omega$ is a proper subset of $\Phi$, then 
\[ \Gamma_\Omega = \langle R_i \mid -1\leq i\leq k,\,F_{i}\not\in\Omega\rangle .\]

For each nonempty subset $I$ of $\{-1,0,\ldots,k\}$ define $\Gamma_{I} := \langle R_i \mid i\in I\rangle$; and for $I=\emptyset$ define $\Gamma_I := R_{-1}=\Gamma_\Phi$. (As a warning, the notation $\Gamma_\emptyset$ can have two meanings, either as $\Gamma_\Omega$ with $\Omega=\emptyset$ or $\Gamma_I$ with $I=\emptyset$; the context should make it clear which of the two is being used.)  Thus 
\[ \Gamma_{I} = \Gamma_{\{F_j\mid j\not\in I\}} \quad (I\subseteq\{-1,0,\ldots,k\}); \] 
or equivalently, 
\[ \Gamma_\Omega = \Gamma_{\{i\mid F_i \not\in\Omega\}} \quad (\Omega\subseteq \Phi). \]

The automorphism group $\Gamma$ of $\K$ and its distinguished generating system satisfy the following important {\em intersection property\/}: 
\begin{equation}
\label{intprop}
\Gamma_I \cap \Gamma_J = \Gamma_{I\cap J}\qquad (I,J\subseteq \{-1,0,\ldots,k\}) . 
\end{equation}

The combinatorial structure of $\K$ can be completely described in terms of the distinguished generating system of $\Gamma(\K)$. In fact, bearing in mind that $\Gamma$ acts transitively on the faces of each rank, the partial order is given by
\[ F_{i}\varphi \leq F_{j}\psi\; \longleftrightarrow\; \psi^{-1}\varphi \in \Gamma_{\{i+1,\ldots,k\}}\Gamma_{\{-1,0,\ldots,j-1\}}
\;\quad (-1\leq i\leq j\leq k;\, \varphi,\psi\in\Gamma) ,\]
or equivalently,
\begin{equation}
\label{partorder}
F_{i}\varphi \leq F_{j}\psi\, \longleftrightarrow \,
\Gamma_{\{-1,0,\ldots,k\}\setminus\{i\}}\varphi \cap \Gamma_{\{-1,0,\ldots,k\}\setminus\{j\}}\psi \neq \emptyset
\quad (-1\leq i\leq j\leq k;\, \varphi,\psi\in\Gamma). 
\end{equation}

Conversely, if $\Gamma$ is any group with a system of subgroups $R_{-1},R_0,\ldots,R_k$ such that (\ref{commu}), (\ref{genga}) and (\ref{intprop}) hold, and $R_{-1}=R_k$, then $\Gamma$ is a flag-transitive subgroup of the full automorphism group of a regular incidence complex $\K$ of rank $k$ (see again \cite{kom2}, or \cite[Ch. 2]{arp} for polytopes). The $i$-faces of $\K$ are the right cosets of $\Gamma_{\{-1,0,\ldots,k\}\setminus\{i\}}$ for each $i$, and the partial order is given by (\ref{partorder}). The homogeneity parameters $c_0,\ldots,c_{k-1}$ are determined by (\ref{ci}).

For abstract regular polytopes, these structure results lie at the heart of much research activity in this area (see \cite{arp}). In this case the flag stabilizer $\Gamma_\Phi$ is the trivial group, and each nontrivial subgroup $R_i$ (with $i\neq -1,k$) has order $2$ and is generated by an involutory automorphism $\rho_i$ that maps $\Phi$ to its unique $i$-adjacent flag. The group of an abstract regular polytope is then a {\em string C-groups\/}, meaning that the {\em distinguished involutory generators\/} $\rho_0, \ldots, \rho_{k-1}$ satisfy both the commutativity relations typical of a Coxeter group with string diagram, and the intersection property~(\ref{intprop}).

\section{Power complexes}
\label{pow}

In this section we briefly review the construction of the {\em power complexes\/} $n^\K$, an interesting family of incidence complexes with $n$ vertices on each edge, and with each vertex-figure isomorphic to $\K$ (see \cite{esext}, and \cite[Section 8D]{arp} for $n=2$). These power complexes were first discovered by Danzer in the early 1980's; however, the construction announced in~\cite{dan} was never published by Danzer, and first appeared in print in~\cite{esext}. The power complexes $n^\K$, with $n=2$ and $\K$ a polytope, are abstract polytopes and have attracted a lot of attention (see \cite[Ch. 8]{arp}). In a sense, these power complexes are generalized cubes; and in certain cases (when $\K$ has simplex facets) they can also be viewed as cubical complexes (see \cite{buch,nov}). We briefly review some applications in Section~\ref{app}.

To begin with, we say that an (incidence) complex $\K$ is {\em vertex-describable} if its faces are uniquely determined by their vertex-sets. A complex is vertex-describable if and only if its underlying face poset can be represented by a family of subsets of the vertex-set ordered by inclusion. If a complex $\K$ is a lattice, then $\K$ is vertex-describable. For example, the torus map $\K=\{4,4\}_{(s,0)}$ is vertex-describable if and only if $s\geq 3$. The faces of a vertex-describable complex are again vertex-describable.

Now let $n\geq 2$ and define $N:=\{1,\ldots,n\}$. Suppose $\K$ is a finite vertex-describable $(k-1)$-complex with $v$ vertices and vertex-set $V:=\{1,\ldots,v\}$. Then $\Po:=n^\K$ will be a finite $k$-complex with vertex-set 
\begin{equation}
\label{nv}
N^v = \bigotimes_{i=1}^{v} N ,
\end{equation}
the cartesian product of $v$ copies of $N$; its $n^v$ vertices are written as row vectors 
$\varepsilon := (\varepsilon_1,\ldots,\varepsilon_v)$. Now recall that, since $\K$ is vertex-describable, we may view the faces of $\K$ as subsets of $V$. With this in mind we take as $j$-faces of $\Po$, for any $(j-1)$-face $F$ of $\K$ and any vector $\varepsilon = (\varepsilon_1,\ldots,\varepsilon_v)$ in $N^v$, the subsets $F(\varepsilon)$ of $N^v$ defined by
\begin{equation}
\label{fep}
F(\varepsilon) := \{(\eta_1,\ldots,\eta_{v})\in N^v \! \mid \eta_{i} = \varepsilon_{i} \mbox{ if } i\not\in F\}
\end{equation}
or, abusing notation, the cartesian product
\[ F(\varepsilon) \,:=\, ( \bigotimes_{i \in F} N ) \times ( \bigotimes_{i \not\in F} \{\varepsilon_i\} ). \]
In other words, the $j$-face $F(\varepsilon)$ of $\mathcal{P}$ consists of the vectors in $N^v$ that coincide with $\varepsilon$ precisely in the components determined by the vertices of $\K$ not lying in the $(j-1)$-face $F$ of $\K$. It follows that, if $F$, $F'$ are faces of $\K$ and $\varepsilon=(\varepsilon_1,\ldots,\varepsilon_v)$, $\varepsilon' =(\varepsilon_1,\ldots,\varepsilon_v)$ are vectors in~$N^v$, then $F(\varepsilon) \subseteq F'(\varepsilon')$ if and only if $F \leq F'$ in $\K$ and $\varepsilon_i = \varepsilon_{i}'$ for each $i$ not contained in~$F'$. 

It can be shown that the set of all faces $F(\varepsilon)$, where $F$ is a face of $\K$ and $\varepsilon$ a vector in $N^v$, partially ordered by inclusion (and supplemented by the empty set as least face), is an incidence complex of rank $k$. This is the desired complex $\Po=n^\K$. 

The following theorem summarizes a number of key properties of power complexes.

\begin{theorem}
\label{propnk}
Let $\K$ be a finite incidence complex of rank $k-1$ with $v$ vertices, and let $\K$ be vertex-describable. Then the power complex $\Po:=n^\K$ has the following properties.\\[.04in]
(a) $\Po$ is an incidence complex of rank $k$ with vertex-set $N^v$ and each vertex-figure isomorphic to $\K$.\\[.02in]
(b) If $F$ is a $(j-1)$-face of $\K$ and $\F:=F/F_{-1}$ is the $(j-1)$-complex determined by~$F$, then the $j$-faces of $\Po$ of the form $F(\varepsilon)$ with $\varepsilon$ in $N^v$ are isomorphic to the power complex $n^\F$ of rank~$j$.\\[.02in]
(c) $\Gamma(\Po)$ contains a subgroup $\Lambda$ isomorphic to $S_n\wr \Gamma(\K) = S_{n}^{v}\rtimes \Gamma(\K)$, the wreath product of $S_n$ and $\Gamma(\mathcal{K})$ defined by the natural action of $\Gamma(\K)$ on the vertex-set of $\K$. Moreover, $\Lambda$ acts vertex-transitively on $\Po$ and has vertex stabilizers isomorphic to $S_{n-1}\wr \Gamma(\K)$.\\[.02in]
(d) If $\mathcal{K}$ is regular, then so is $\Po$. In this case the subgroup $\Lambda$ of $\Gamma(\Po)$ of part~(d) acts flag-transitively on $\Po$; in particular, if $n=2$ and $\K$ is polytope, then $\Lambda=\Gamma(\Po)$.
\end{theorem}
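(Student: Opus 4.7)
The plan is to work directly with the face encoding of $\Po=n^\K$ by pairs $(F,\varepsilon)$ with $F\in\K$ and $\varepsilon\in N^v$, and to translate each assertion into a statement about this encoding. The crucial combinatorial input, recorded immediately after the definition, is the incidence rule $F(\varepsilon)\subseteq F'(\varepsilon')$ iff $F\leq F'$ in $\K$ and $\varepsilon_i=\varepsilon'_i$ for every $i\notin F'$. Unwinding it, the flags of $\Po$ are parametrized by pairs $(\Psi,\varepsilon)$ with $\Psi=\{F_{-1}<F_0<\cdots<F_{k-1}\}$ a flag of $\K$ and $\varepsilon\in N^v$, via the sequence $\emptyset,\{\varepsilon\},F_0(\varepsilon),\ldots,F_{k-2}(\varepsilon),N^v$, so $|\F(\Po)|=|\F(\K)|\cdot n^v$. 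For part~(a) I would verify (I1)--(I4) in turn: (I1) and (I2) are immediate from the flag encoding. For (I4), two flags $(\Psi,\varepsilon)$ and $(\Psi',\varepsilon')$ are $0$-adjacent exactly when $\Psi=\Psi'$ and $\varepsilon,\varepsilon'$ differ only at the coordinate indexed by the vertex $F_0$ of $\Psi$, giving $c_0=n$; for $1\leq i\leq k-1$ they are $i$-adjacent exactly when $\varepsilon=\varepsilon'$ and $\Psi,\Psi'$ differ in the $(i-1)$-face only, giving $c_i=c_{i-1}(\K)$. Strong flag-connectedness (I3) then reduces to the corresponding property of $\K$ combined with single-coordinate connectivity of $N^v$. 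The vertex-figure at $\varepsilon$ is the subposet $\{F(\varepsilon)\mid F\in\K\text{ proper}\}$, and vertex-describability of $\K$ makes $F\mapsto F(\varepsilon)$ an order isomorphism to $\K$. Part~(b) is then a self-similarity statement: the faces of $\Po$ contained in $F(\varepsilon)$ are exactly the $F'(\varepsilon')$ with $F'\leq F$ and $\varepsilon'_i=\varepsilon_i$ for $i\notin F$, and letting $\varepsilon'$ vary freely on the vertices of $F$ reproduces the construction $n^\F$ applied to $\F=F/F_{-1}$.

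For part~(c), I would define the action of $\Lambda=S_n^v\rtimes\Gamma(\K)$ on vertices by $(s,\gamma)\cdot\varepsilon=(s_1\varepsilon_{\gamma^{-1}(1)},\ldots,s_v\varepsilon_{\gamma^{-1}(v)})$ and extend it to faces by $(s,\gamma)\cdot F(\varepsilon)=(\gamma F)((s,\gamma)\cdot\varepsilon)$. Checking that this respects the incidence rule shows $\Lambda\leq\Gamma(\Po)$, and faithfulness follows from vertex-describability of $\K$ (any $\gamma$ trivial on $V$ is trivial in $\Gamma(\K)$). Vertex-transitivity is immediate because $S_n^v$ already acts transitively on $N^v$, and the stabilizer of $(1,\ldots,1)$ in $\Lambda$ consists of those $(s,\gamma)$ with $s_i(1)=1$ for all $i$, i.e., $S_{n-1}^v\rtimes\Gamma(\K)=S_{n-1}\wr\Gamma(\K)$. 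For part~(d), given flags $(\Psi,\varepsilon)$ and $(\Psi',\varepsilon')$ of $\Po$, I would use regularity of $\K$ to pick $\gamma\in\Gamma(\K)$ with $\gamma\Psi=\Psi'$ and then pick $s\in S_n^v$ with $s\cdot(\gamma\cdot\varepsilon)=\varepsilon'$; this shows $\Lambda$ is flag-transitive on $\Po$, hence $\Po$ is regular. When $n=2$ and $\K$ is a polytope, all $c_i(\Po)=2$ and $\Po$ is a polytope; since the automorphism group of any polytope acts freely on flags (an automorphism fixing a flag fixes all flags by the uniqueness of $i$-adjacents and flag-connectedness), one obtains $|\Gamma(\Po)|\leq|\F(\Po)|=|\F(\K)|\cdot 2^v=|\Gamma(\K)|\cdot 2^v=|\Lambda|$, forcing $\Lambda=\Gamma(\Po)$.

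The main obstacle I anticipate is the bookkeeping for the $\Lambda$-action in part~(c): the notation $F(\varepsilon)$ has built-in redundancy (many pairs $(F,\varepsilon)$ yield the same face) and the semidirect-product multiplication has to be matched carefully so that the formula descends to equivalence classes and preserves $\subseteq$. Once this verification is in place, the remaining assertions — faithfulness, vertex and flag transitivity, stabilizer computation, and the identification $\Lambda=\Gamma(\Po)$ in the $n=2$ polytope case — reduce to straightforward orbit-counting inside the wreath product.
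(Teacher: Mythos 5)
Your proposal is correct and follows essentially the same route as the paper: the incidence rule for faces $F(\varepsilon)$, the identification of vertex-figures with $\K$, the trace argument for part (b), the subgroups $S_n^v$ and $\widehat{\Gamma(\K)}$ generating the wreath product for part (c), and flag-transitivity for part (d). You actually supply more detail than the paper in the places it labels ``straightforward'' (the explicit computation of the $c_i$, and the flag-counting argument forcing $\Lambda=\Gamma(\Po)$ when $n=2$ and $\K$ is a polytope), and these added steps are sound.
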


\begin{proof}
For power complexes $2^\K$ regular polytopes $\mathcal{K}$ these facts are well-known (see \cite[Section 8D]{arp} and \cite{psw,esext}). Here we briefly outline the proof for general power complexes, as no general proof has been published anywhere. So, as before, let $\K$ be a finite vertex-describable complex of rank $k-1$.

Begin by making the following important observation regarding inclusion of faces in~$\Po$:\ if $F(\varepsilon) \subseteq F'(\varepsilon')$, with $F, F',\varepsilon,\varepsilon'$ as above, then  $F'(\varepsilon') = F'(\varepsilon)$. Thus, in designating the larger face we may take $\varepsilon'=\varepsilon$.  It follows that every face containing a given vertex $\varepsilon$ must necessarily be of the form $F(\varepsilon)$ with $F\in\mathcal{K}$, and that any two such faces $F(\varepsilon)$ and $F'(\varepsilon)$ are incident in $\Po$ if and only if $F$ and $F'$ are incident in $\K$. As an immediate consequence, $\Po$ must have vertex-figures isomorphic to $\K$. It is straightforward to prove that $\Po$ actually is an incidence complex of rank $k$.

For part (b), let $F$ be a $(j-1)$-face of $\K$ with $v_F$ vertices and vertex-set $V_F$, and let $\varepsilon$ be a vector in $N^v$. Now, if $F'(\varepsilon')$ is any face of $\Po$ with $F'(\varepsilon')\subseteq F(\varepsilon)$ in $\Po$, then necessarily $F'\leq F$ in $\K$ and $\varepsilon_{i}' = \varepsilon_{i}$ for each $i\not\in F$; in other words, the vectors $\varepsilon$ and $\varepsilon'$ agree on each component representing a vertex $i$ of $\K$ that lies outside $F$. It follows that the components of vectors in $N^v$ corresponding to vertices $i$ of $\K$ outside of $F$ do not matter in determining the structure of the $j$-face $F(\varepsilon)$ of $\Po$. Hence, if we omit these components and simply write $\eta_{F}:=(\eta_i)_{i\in F}$ for the ``trace" of a vector $\eta$ on $F$, then $\eta_F$ lies in the cartesian product $N^{v_F}:=\bigotimes_{i\in V_F} N$, and the faces $F(\varepsilon)$ and $F'(\varepsilon')$ of $\Po$ can safely be designated by $F(\varepsilon_F)$ and $F'(\varepsilon_{F}')$, respectively. Then, in particular, $F(\varepsilon_F) = N^{v_F}$ is the unique greatest face of $n^\F$, and $F'(\varepsilon_{F}')$ becomes a face of $n^\F$. Moreover, the partial order on the $j$-face $F(\varepsilon)$ of $\Po$ is just the standard inclusion of faces in $n^\F$. Thus, as a complex, $F(\varepsilon)$ is isomorphic to $n^\F$. This proves part (b).

The automorphism group $\Gamma(\Po)$ always contains a subgroup $\Sigma$ isomorphic to $S_n^v$, the direct product of $v$ copies of the symmetric group $S_n$. In fact, for each $i=1,\ldots,v$, the symmetric group $S_n$ can be viewed as acting on the $i^{th}$ component of the vectors in $N^v$ (while leaving all other components unchanged), and this action on the vertex-set $N^v$ induces an action as a group of automorphisms on $\Po$. In particular, $\Sigma$ acts vertex-transitively on $\Po$, so the same holds for $\Gamma(\Po)$ as well.

Moreover, $\Gamma(\K)$ is naturally embedded in $\Gamma(\Po)$ as a subgroup of the vertex-stabilizer of $\varepsilon=(0,\ldots,0)$ in $\Gamma(\Po)$. In fact, each automorphism $\varphi$ of $\K$ determines an automorphism $\widehat{\varphi}$ of $\Po$ as follows. Define $\widehat{\varphi}$ on the set of vertices $\eta=(\eta_1,\ldots,\eta_v)$ by~\footnote{Throughout we write maps on the right.}
\[(\eta)\widehat{\varphi} := (\eta_{(1)\varphi},\ldots,\eta_{(v)\varphi}) =:\eta_\varphi, \]
and more generally on the set of faces $F(\eta)$ of $\Po$ by
\[ F(\eta)\widehat{\varphi} := (F\varphi)(\eta_\varphi) .\]
Then it is straightforward to verify that $\widehat{\varphi}$ is indeed an automorphism of $\Po$, and that $\widehat{\varphi}$ fixes $\varepsilon=(0,\ldots,0)$. It follows that the two subgroups $\Sigma$ and $\Gamma(\K)$ together generate a subgroup $\Lambda$ of $\Gamma(\Po)$ isomorphic to $S_{n}\wr \Gamma(\K) \cong S_{n}^{v}\rtimes \Gamma(\K)$. Clearly, $\Lambda$ acts vertex-transitively and has vertex-stabilizers isomorphic to $S_{n-1}\wr \Gamma(\K)$. Now part (c) follows. 

Finally, suppose $\K$ is regular. Then $\Lambda$ acts flag-transitively on $\Po$, and so does $\Gamma(\Po)$. Thus $\Po$ is regular. If $n=2$ and $\K$ is a regular polytope, then $\Po$ is also a regular polytope and $\Gamma(\Po)=\Lambda$. This proves part~(d).
\end{proof}

We do not know of an example of a power complex $n^\K$, with $\K$ regular, where the full automorphism group of $n^\K$ is strictly larger than its subgroup $S_n\wr\Gamma(\K)$.

\begin{figure}[h]
\begin{center}
\includegraphics[scale = .5]{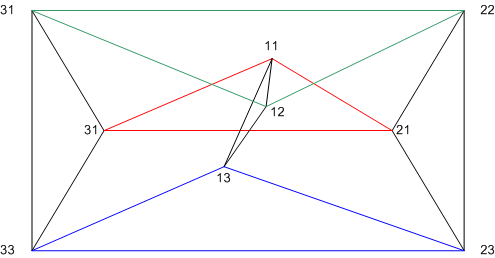}
\caption{Combinatorics of the complex square $\gamma_{2}^{3}$}
\label{fig:compsquare}
\end{center}
\end{figure}

In Section~\ref{app}, we discuss a number of interesting applications of the $n^\K$ construction. Here we just describe the most basic example obtained when $\K=\alpha_{v-1}$ (see \cite{crp}), the $(v-1)$-simplex (with $v$ vertices).  In this case $n^\K$ is combinatorially isomorphic to the {\it complex $v$-cube\/}
\[ \gamma_{v}^{n} =  n \{4\} 2 \{3\} 2 \cdots 2 \{3\} 2  \]
in $v$-dimensional unitary complex $v$-space $\mathbb{C}^v$, that is, $n^{\alpha_{v-1}}= \gamma_{v}^{n}$. The unitary complex symmetry group of $\gamma_{v}^{n}$ is isomorphic to $C_{n}\wr S_v$ (see Coxeter~\cite{cox1} and Shephard~\cite{shep}). However,  the combinatorial automorphism group of $\gamma_{v}^{n}$ is much larger when $n>2$, and includes a subgroup isomorphic to $S_{n}\wr S_v$. The case $n=2$ always gives the ordinary real $v$-cube $\gamma_v:=\gamma_v^2 = \{4,3^{v-2}\}$ (see~\cite{crp}). 

The combinatorics of the {\em complex square\/} $\gamma_{2}^{3}=3\{4\}2$ in $\mathbb{C}^2$ (obtained when $v=2$ and $n=3$) is illustrated in Figure~\ref{fig:compsquare}; there are $9$ vertices (denoted $i\,j$ with $i,j=1,2,3$), each contained in $2$ edges (drawn as $3$-cycles), as well as $6$ edges, each containing $3$ vertices. 

Now let $\K$ be an arbitrary incidence complex of rank $k$, and let $0\leq j\leq k-1$. The {\em $j$-skeleton\/} $skel_j(\K)$ of $\K$ is the incidence complex, of rank $j+1$, whose faces of rank less than or equal to $j$ are those of $\K$, with the partial order inherited from $\K$; as greatest face, of rank $j+1$, we may simply take the greatest face of $\K$. 

The following lemma says that taking skeletons and taking power complexes are commuting operations.  

\begin{lemma}
\label{skel}
Let $\K$ be a finite vertex-describable $k$-complex, let $0\leq j\leq k-1$, and let $n\geq 2$. Then
\[ skel_{j+1}(n^\K) = n^{skel_j(\K)} .\]
\end{lemma}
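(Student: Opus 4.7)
The plan is to exhibit both complexes as the same family of subsets of $N^v$, where $v$ is the number of vertices of $\K$ (and hence also of $skel_j(\K)$), under set inclusion. First I would note that both sides are incidence complexes of rank $j+2$: the complex $n^\K$ has rank $k+1$, so its $(j+1)$-skeleton has rank $j+2$, and $skel_j(\K)$ has rank $j+1$, so the power construction again yields rank $j+2$. A preliminary sanity check is that $skel_j(\K)$ remains vertex-describable, hence eligible as input to the power construction; this is immediate, since every face of $skel_j(\K)$ is either a face of $\K$ or the greatest face of $\K$ relabelled in rank, and is therefore determined by its vertex-set.

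Next I would catalogue the faces on each side in parallel. On the left, the proper faces of $skel_{j+1}(n^\K)$ of rank at most $j+1$ are exactly the rank-$\leq j+1$ faces of $n^\K$, namely the subsets $F(\varepsilon)\subseteq N^v$ with $F$ a face of $\K$ of rank at most $j$ and $\varepsilon\in N^v$; the unique greatest face, of rank $j+2$, is the greatest face of $n^\K$, which equals $N^v$. On the right, the faces of $n^{skel_j(\K)}$ are the subsets $F(\varepsilon)$ with $F$ a face of $skel_j(\K)$: for $F$ of rank at most $j$ these coincide with the subsets enumerated on the left, and for the greatest face of $skel_j(\K)$ (which has all $v$ vertices) the cartesian-product formula yields $F(\varepsilon)=N^v$.

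The candidate isomorphism is then the identity map on subsets of $N^v$, with matching least and greatest faces. Order preservation in both directions is immediate, because the partial order on any power complex is set inclusion of the subsets $F(\varepsilon)$, and skeletons inherit their order from the ambient complex, so on both sides the partial order is literally inclusion in $N^v$. I do not expect any real obstacle here; the argument is essentially a matching of definitions, and the only step requiring care is the bookkeeping at the top rank, which reduces to the observation that the greatest face of $\K$ has vertex-set $V$ and hence produces $N^v$ under the power construction regardless of the rank it is assigned.
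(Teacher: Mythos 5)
Your proposal is correct and follows essentially the same route as the paper's own proof: both arguments identify the proper faces on each side as the sets $F(\varepsilon)$ with $F$ a face of $\K$ of rank at most $j$, observe that these coincide because the rank-$\leq j$ faces of $\K$ and of $skel_j(\K)$ are the same, and note that the partial order on both sides is inclusion in $N^v$. Your explicit bookkeeping of the greatest face (both sides yielding $N^v$) and the remark that $skel_j(\K)$ stays vertex-describable are exactly the points the paper's proof states or leaves implicit, so nothing further is needed.
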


\begin{proof}
The proof is straightforward. First note that a skeleton of a vertex-describable complex is again vertex-describable, with the same vertex set as the underlying complex. The proper faces of $skel_{j+1}(n^\K)$ are the faces $F(\varepsilon)$ of $n^\K$ where $F$ has rank at most $j$ and $\varepsilon$ lies in $N^v$. On the other hand, the proper faces of $n^{skel_j(\K)}$ are of the form $F(\varepsilon)$ where $F$ is a face of $skel_j(\K)$ of rank at most $j$ and $\varepsilon$ lies in $N^v$. But the faces of $\K$ of rank at most $j$ are precisely the faces of $skel_j(\K)$ of rank at most $j$. Now the lemma follows. 
\end{proof}

We conclude this section with a nice application of the lemma. Suppose $n\geq 2$ and $\K$ is the (unique) complex of rank $1$ with $v$ vertices. Now identifying $\K$ with $skel_{0}(\alpha_{v-1})$ we then have
\begin{equation}
\label{oneskel}n^\K = n^{skel_{0}(\alpha_{v-1})} = skel_{1}(n^{\alpha_{v-1}}) = skel_{1}(\gamma_v^n).
\end{equation}
Thus the $2$-complex $n^\K$ is isomorphic to the $1$-skeleton of the unitary complex $v$-cube $\gamma_v^n$ described above. 

\section{Applications}
\label{app}

In this section we briefly review a number of interesting applications of the power complex construction that have appeared in the literature.

First suppose $n=2$ and $\mathcal{K}=\{q\}$ is a $q$-gon with $3\leq q<\infty$. It was shown in \cite[Ch. 8D]{arp} that $2^{\{q\}}$ is isomorphic to Coxeter's regular map $\{4,q \!\mid\!4^{\lfloor q/2\rfloor -1}\}$ in the $2$-skeleton of the ordinary $q$-cube $\gamma_{q}=\{4,3^{q-2}\}$, whose edge-graph coincides with that of the cube (see \cite[p. 57]{crsp}). In fact, the method of construction directly produces a realization of $2^{\{q\}}$ in the $2$-skeleton of $\gamma_{q}$, which is identical with the realization outlined in~\cite{crsp}. This map and its realizations were rediscovered several times in the literature. For example, Ringel~\cite{ri} and Beineke-Harary~\cite{beh} established that the genus $2^{q-3}(q-4)+1$ of Coxeter's map is the smallest genus of any orientable surface into which the edge-graph of the $q$-cube can be embedded without self-intersections. It is rather surprising that each map $\{4,q \!\mid\! 4^{\lfloor q/2\rfloor -1}\}$, as well as its dual $\{q,4 \!\mid\! 4^{\lfloor q/2\rfloor -1}\}$, can also be embedded as a polyhedron without self-intersections in ordinary $3$-space (see McMullen-Schulz-Wills~\cite{msw2} and McMullen-Schulte-Wills~\cite{msw}). When $q \geq 12$, the genus of this polyhedron exceeds the number of vertices, $2^q$, of $\{4,q \!\mid\! 4^{[q/2]-1}\}$, which is somewhat hard to visualize.

When $n=2$ and $\mathcal{K}$ is an abstract $2m$-polytope given by a neighborly simplicial $(2m-1)$-sphere, the corresponding power complex $2^\mathcal{K}$ gives an {\em $m$-Hamiltonian\/} $2m$-manifold embedded as a subcomplex of a higher-dimensional cube (see K\"uhnel-Schulz~\cite{kusc}, Effenberger-K\"uhnel~\cite{eff}). Recall here that a polytope is {\em neighborly\/} if any two of its vertices are joined by an edge. The $m$-Hamiltonicity then refers to the distinguished property that $2^\mathcal{K}$ contains the full $m$-skeleton of the ambient cube. In this sense, Coxeter's map $\{4,q \!\mid\! 4^{\lfloor q/2\rfloor -1}\}$ gives a $1$-Hamiltonian surface. 

The case when $n=2$ and $\K$ is an (abstract) regular polytope has inspired a number of generalizations of the $2^\K$ construction that have proved important in the study of universality questions and extensions of regular polytopes (see \cite[Ch. 8]{arp}). A particularly versatile generalization is to polytopes $2^{\mathcal{K},\mathcal{D}}$, where $\K$ is a vertex-describable regular $k$-polytope with $v$ vertices, and $\mathcal{D}$ is a Coxeter diagram on $v$ nodes admitting a suitable action of $\Gamma(\mathcal{K})$ as a group of diagram symmetries. The corresponding Coxeter group $W(\mathcal{D})$ then can be extended by $\Gamma(\K)$ to obtain the automorphism group $W(\mathcal{D})\ltimes \Gamma(\mathcal{K})$ of a regular $(k+1)$-polytope denoted $2^{\mathcal{K},\mathcal{D}}$. This polytope is generally infinite, and its vertex-figures are isomorphic to $\K$.  When $\mathcal{D}$ is the {\em trivial\/} diagram, without branches, on the vertex set of $\mathcal{K}$, the $(k+1)$-polytope $2^{\mathcal{K},\mathcal{D}}$ is isomorphic to the power complex $2^\mathcal{K}$ and the Coxeter group $W(\mathcal{D})$ is just $C_2^{v}$. This provides an entirely different construction of power complexes $2^\K$ based on regular polytopes $\K$. 

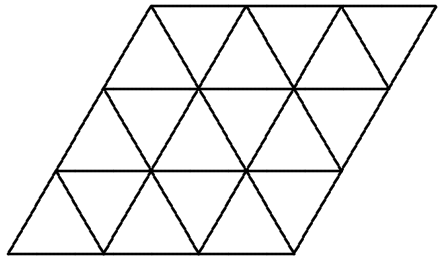
\begin{figure}[h]
\begin{center}
\begin{picture}(158,110)
\put(0,7){
\multiput(0,0)(0.5,0){216}{\circle*{1}}
\multiput(18,31.17)(.5,0){216}{\circle*{1}}
\multiput(36,62.35)(.5,0){216}{\circle*{1}}
\multiput(54,93.55)(.5,0){216}{\circle*{1}}
\multiput(0,0)(.25,.435){216}{\circle*{1}}
\multiput(36,0)(.25,.435){216}{\circle*{1}}
\multiput(72,0)(.25,.435){216}{\circle*{1}}
\multiput(108,0)(.25,.435){216}{\circle*{1}}
\multiput(36,0)(-.25,.435){72}{\circle*{1}}
\multiput(72,0)(-.25,.435){144}{\circle*{1}}
\multiput(108,0)(-.25,.435){216}{\circle*{1}}
\multiput(126,31.175)(-.25,.435){144}{\circle*{1}}
\multiput(144,62.35)(-.25,.435){72}{\circle*{1}}}
\end{picture}
\caption{The torus map $\{3,6\}_{(3,0)}$}
\label{tormap}
\end{center}
\end{figure}

The polytopes $2^{\mathcal{K},\mathcal{D}}$ are very useful in the study of universal regular polytopes, as the following example illustrates (again, see \cite[Ch. 8]{arp}). Let $\mathcal{K}$ be a (polytopal) regular map of type $\{3,r\}$ on a surface, for instance,  a torus map $\{3,6\}_{(b,0)}$ or $\{3,6\}_{(b,b)}$ (see Figure~\ref{tormap}). Suppose we wish to investigate regular $4$-polytopes (if they exist) with cubes $\{4,3\}$ as facets and with copies of $\K$ as vertex-figures. In particular this would involve determining when the universal such structure, denoted 
\[ \mathcal{U}:=\{\{4,3\},\mathcal{K}\},\] 
is a finite polytope. It turns out that this  universal polytope $\mathcal{U}$ always exists (for any $\K$), and that $\mathcal{U}=2^{\mathcal{K},\mathcal{D}}$ for a certain Coxeter diagram $\mathcal{D}$ depending on $\K$ (see \cite[Thm. 8E10]{arp}). In particular, $\mathcal{U}$ is finite if and only if $\mathcal{K}$ is neighborly. In this case $\mathcal{U}=2^\mathcal{K}$ and $\Gamma(\mathcal{U}) = C_2^v \ltimes \Gamma(\mathcal{K})$ (and $\mathcal{D}$ is trivial). For example, if $\K$ is the hemi-icosahedron $\{3,5\}_5$ (with group $[3,5]_5$), then 
\[ \mathcal{U}=\{\{4,3\},\{3,5\}_5\}=2^{\{3,5\}_5}\] 
and 
\[\Gamma(\mathcal{U}) = C_{2}\wr [3,5]_5 = C_{2}^{6}\ltimes [3,5]_5.\] 

\section{Coverings}
\label{quocov}

In this section we investigate coverings of power complexes. We begin with some terminology; see \cite[Ch. 2D]{arp} for similar notions for abstract polytopes.

Let $\K$ and $\LL$ be (incidence) complexes of rank $k$. A map $\gamma: \K\rightarrow \LL$ is called a {\em homomorphism\/} if $\gamma$ preserves incidence in one direction; that is, $F\gamma \leq G\gamma$ in $\LL$ whenever $F\leq G$ in $\K$. (Automorphisms are bijections that are order preserving in both directions.)  A homomorphism $\gamma$ is a \textit{rap-map} if $\gamma$ is \underbar{r}ank preserving and \underbar{a}djacency \underbar{p}reserving; that is, faces of $\K$ are mapped to faces of $\LL$ of the same rank, and pairs of adjacent flags of $\K$ are mapped onto pairs of adjacent flags of $\LL$. A surjective rap-map $\gamma$ is called a \textit{covering} ({\em map\/}). Similarly we call a homomorphism $\gamma: \K\rightarrow \LL$ a {\em weak rap-map\/} if $\gamma$ is rank preserving and {\em weakly adjacency preserving\/}, meaning that $\gamma$ maps a pair of adjacent flags of $\K$ onto a pair of flags of $\LL$ that are adjacent or identical.  

Figure \ref{fig:rap-map1} illustrates an example of a covering $\gamma: \K\rightarrow \LL$ between a hexagon $\K$ with vertices $1,\ldots,6$, and a triangle $\LL$ with vertices $1,2,3$, given by $i, i+3\mapsto i$ for $i=1,2,3$. The edges are mapped by $\{i,i+1\}$, $\{i+3,i+4\} \mapsto \{i,i+1\}$. Thus $\gamma$ wraps the hexagon twice around the triangle.

\begin{figure}[h]
\begin{center}
\includegraphics[scale = .5]{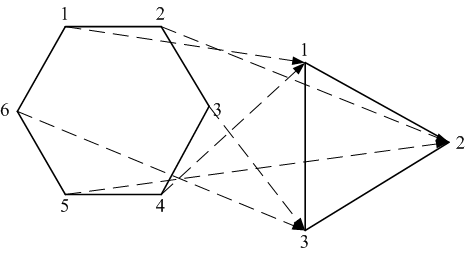}
\caption{A hexagon wrapped around the triangle}
\label{fig:rap-map1}
\end{center}
\end{figure}

Returning to the general discussion, let $\K$ be a $k$-complex and $\Sigma$ be a subgroup of $\Gamma(\K)$. Denote the set of orbits of $\Sigma$ in $\K$ by $\K/\Sigma$, and the orbit of a face $F$ of $\K$ by $F\cdot\Sigma$. Then introduce a partial ordering on $\K/\Sigma$ as follows: if $\widehat{F},\widehat{G}\in\K/\Sigma$, then $\widehat{F}\leq\widehat{G}$ if and only if $\widehat{F}=F\cdot\Sigma$ and $\widehat{G} = G\cdot\Sigma$ for some faces $F$ and $G$ of $\K$ with $F\leq G$. The set $\K/\Sigma$ together with this partial order is the \textit{quotient of $\K$ with respect to }$\Sigma$. The triangle in Figure~\ref{fig:rap-map1} is a quotient of the hexagon obtained by identifying opposite vertices; here $\Sigma$ is generated by the central involution
in $D_6$, the group of the hexagon. 
\eject

\noindent 
{\bf Coverings $n^\K \rightarrow m^\LL$ with $n\geq m$.}
\smallskip

The following theorem says that coverings between (vertex-describable) incidence complexes naturally induce coverings or weak coverings between the corresponding power complexes.

\begin{theorem}
\label{covind}
Let $\K$ and $\LL$ be finite vertex-describable incidence complexes of rank $k$, and let $\gamma: \K \rightarrow \LL$ be a covering. Moreover, let $n\geq m \geq 2$ and $f\!:\!\{1,\ldots,n\}\rightarrow \{1,\ldots,m\}$ be a surjective mapping. Then $\gamma$ and $f$ induce a weak covering $\pi_{\gamma,f}\!:\, n^\K \rightarrow m^\LL$ between the power complexes $n^\K$ and $m^\LL$. Moreover, $\pi_{\gamma,f}$ is a covering if and only if $f$ is a bijection (and $n=m$).
\end{theorem}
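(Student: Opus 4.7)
The plan is to write down $\pi_{\gamma,f}$ explicitly on faces, verify that it is a rank- and order-preserving surjection, and then analyze its effect on adjacent flags using the natural parametrization of flags of a power complex.

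First I would give $N=\{1,\ldots,n\}$ a cyclic additive structure (identifying $N$ with $\mathbb{Z}/n$), and define $\pi_{\gamma,f}$ on faces by
\[ \pi_{\gamma,f}\bigl(F(\varepsilon)\bigr) := (F\gamma)(\bar\varepsilon), \qquad \bar\varepsilon_j := f\Bigl(\sum_{i\in\gamma^{-1}(j)\cap V_\K}\varepsilon_i\Bigr) \quad (j\in V_\LL). \]
Well-definedness rests on the fiber identity $\gamma^{-1}(j)\cap V_F = \emptyset$ whenever $j\notin V_{F\gamma}=\gamma(V_F)$, which makes $\bar\varepsilon_j$ depend only on $\varepsilon|_{V_\K\setminus V_F}$, exactly the data that determines $F(\varepsilon)$. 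Rank preservation is immediate. Order preservation uses the same fiber identity together with $\gamma$ being a homomorphism. Surjectivity follows from surjectivity of $\gamma$ on faces and of $f$ on labels, since every fiber sum can be realized by an appropriate choice of $\varepsilon$.

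Next I would analyze adjacency by parametrizing each flag of $n^\K$ as a pair $(\varepsilon,\Psi)$, with $\varepsilon$ the vertex and $\Psi=\{F_0^\K,\ldots,F_{k-2}^\K\}$ the underlying flag of $\K$; similarly for $m^\LL$. The map sends $(\varepsilon,\Psi)\mapsto(\bar\varepsilon,\Psi\gamma)$. For $i$-adjacency with $i\geq 1$, only $\Psi$ changes, at its $(i-1)$-face; since $\gamma$ is a covering, $\Psi\gamma$ and its neighbor are strictly $(i-1)$-adjacent in $\LL$, so the image flags in $m^\LL$ are strictly $i$-adjacent. For $0$-adjacency only the coordinate $v:=F_0^\K$ of $\varepsilon$ changes, by some nonzero $\Delta\in\mathbb{Z}/n$, which shifts the fiber sum at $v\gamma$ by $\Delta$; hence $\bar\varepsilon_{v\gamma}$ changes exactly when $f$ separates the two values of that sum. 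Consequently the image flags are either strictly $0$-adjacent or identical, giving the weak-covering claim.

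For the characterization I would argue: $\pi_{\gamma,f}$ is a strict covering iff every $0$-adjacent pair is sent to a strictly $0$-adjacent pair, iff flipping any coordinate $\varepsilon_v$ always produces a change in $\bar\varepsilon_{v\gamma}$; by freely prescribing $\varepsilon$ and using that any $\Delta\neq 0$ in $\mathbb{Z}/n$ can be realized, this reduces to $f$ being injective --- equivalently, $f$ being a bijection, which forces $n=m$. The main obstacle is the $0$-adjacency bookkeeping: isolating the vertex change from the underlying $\K$-flag change, and translating ``flip any $\varepsilon_v$'' into a condition purely on $f$; the fiber-sum construction does precisely this, and ensures the characterization is independent of the particular covering $\gamma$.
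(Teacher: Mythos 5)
Your proof is correct, and its logical skeleton is the same as the paper's: write $\pi_{\gamma,f}$ explicitly as $F(\varepsilon)\mapsto (F\gamma)(\bar\varepsilon)$, establish well-definedness from the fact that $i\in V(F)$ forces $(i)\gamma\in V(F\gamma)$ (so coordinates indexed inside $F$ do not matter), split the adjacency analysis into $r\geq 1$ (only the underlying flag of $\K$ moves, and adjacency is preserved outright because $\gamma$ is a rap-map) versus $r=0$ (only one coordinate of $\varepsilon$ moves, and adjacency may collapse), and read the covering criterion off the $r=0$ case. The one genuine difference is your choice of induced vertex map: the paper relabels so that the vertices $1,\ldots,v(\LL)$ of $\K$ form a transversal of the vertex fibers of $\gamma$ and sets $\varepsilon_f:=(\varepsilon_1 f,\ldots,\varepsilon_{v(\LL)}f)$, discarding the remaining coordinates, whereas you impose a $\mathbb{Z}/n$ structure on $N$ and send each fiber to $f$ applied to its coordinate sum. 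Both are legitimate ``induced'' maps, and your criterion still comes out right: flipping the single coordinate $\varepsilon_{i_0}$ shifts the fiber sum by some $\Delta\neq 0$, and requiring $f(S)\neq f(S+\Delta)$ for all $S$ and all $\Delta\neq 0$ is equivalent to injectivity of $f$, hence to bijectivity. The paper's transversal avoids the extraneous group structure; your fiber-sum map uses all coordinates and is essentially an instance of the map $\pi^{\gamma,g}$ of Theorem~\ref{covindnew} with $g=f\circ\Sigma$, but valid even for non-equifibered $\gamma$. Two minor bookkeeping remarks: your flag parametrization $\{F_0^{\K},\ldots,F_{k-2}^{\K}\}$ silently uses the rank-$(k-1)$ convention of Section~\ref{pow}, while the theorem as stated has $\K$ of rank $k$; and you assert the equality $V_{F\gamma}=\gamma(V_F)$, of which only the inclusion $\gamma(V_F)\subseteq V_{F\gamma}$ is evident for a general covering --- fortunately that inclusion is all your fiber identity actually needs.
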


\begin{proof}
Suppose $V(\K):=\{1,\ldots,v(\K)\}$ and $V(\LL):=\{1,\ldots,v(\LL)\}$ are the vertex sets of $\K$ and $\LL$, respectively. (It will be clear from the context if a label $j$ refers to a vertex of $\K$ or a vertex of $\LL$.)  Then $v(\LL)\leq v(\K)$ since there is a covering map from $\K$ to $\LL$. Define $N:=\{1,\ldots,n\}$ and $M:=\{1,\ldots,m\}$. 

First note that a typical flag in $n^\K$ has the form 
\[ \Phi(\varepsilon) := \{\emptyset, \varepsilon, F_{0}(\varepsilon),\ldots,F_{k}(\varepsilon)\} , \]
where $\varepsilon$ is a vector in $N^{v(\K)}$ and $\Phi:=\{F_{-1},F_0,\ldots,F_k\}$ is a flag of $\K$. Clearly, if $r\geq 1$ and $\Phi,\Phi'$ are $(r-1)$-adjacent flags of $\K$, then $\Phi(\varepsilon),\Phi'(\varepsilon)$ are $r$-adjacent flags of $n^\K$. Similar statements also hold for $m^\LL$.

Now consider the given covering map $\gamma: \K \rightarrow \LL$. For a vertex $j$ of $\K$ write $\bar{j}:=j\gamma$, so $\bar{j}$ is a vertex of $\LL$. Since $\gamma$ is surjective, we may assume that the vertex labeling for $\K$ and $\LL$ is such that the vertices $\bar{1},\bar{2},\ldots,\bar{v(\LL)}$ comprise all the vertices of $\LL$, and in particular that $\bar{j}=j$ for each $j=1,\ldots,v(\LL)$. Now define the mapping
\begin{equation}
\label{nga}
\begin{array}{rccc}
\pi_{\gamma,f}\!: & n^\K & \rightarrow & m^\LL \\[.02in]
& F(\varepsilon) & \rightarrow & (F\gamma)(\varepsilon_f),
\end{array}
\end{equation}
where as usual $F$ denotes a face of $\K$ and $\varepsilon$ a vector in $N^{v(\K)}$, and  
\[ \varepsilon_f := (\varepsilon_1 f,\ldots,\varepsilon_{v(\LL)} f) \]
is the vector in $M^{v(\LL)}$ given by the images under $f$ of the first $v(\LL)$ components of $\varepsilon$. We claim that $\pi:=\pi_{\gamma,f}$ is a well-defined weak covering.

First we prove that $\pi$ is well-defined. For a face $F$ of a complex we let $V(F)$ denote its vertex set. Now suppose we have $F(\varepsilon)=F'(\varepsilon')$ in $n^\K$, where 
$\varepsilon = (\varepsilon_1,\ldots,\varepsilon_{v(\K)})$ and 
$\varepsilon' = (\varepsilon'_1,\ldots,\varepsilon'_{v(\K)})$ belong to $N^{v(\K)}$ and $F,F'$ are faces of $\K$. Then necessarily $F=F'$, since the vertex sets of $F$ and $F'$ must be the same; recall here that $\K$ is vertex-describable. Thus $F\gamma=F'\gamma$. Moreover, $\varepsilon_{i}=\varepsilon'_{i}$ for each $i\not\in V(F)=V(F')$, so $\varepsilon_f$ and $\varepsilon'_{\!f}$ certainly agree on all components indexed by vertices $i$ with $i\not\in V(F)$. All other components of $\varepsilon_f$ and $\varepsilon'_f$ are indexed by a vertex $i$ of $F$; but if $i\in V(F)$ then $\bar{i} = (i)\gamma\in V(F\gamma) = V(F'\gamma)$, and hence $i$ indexes a component where entries are allowed to range freely over $M=(N)f$. Therefore, 
$(F\gamma)(\varepsilon_f)=(F'\gamma)(\varepsilon'_f)$. Thus $\pi$ is well-defined.

Clearly, $\pi$ is a homomorphism since this is true for $\gamma$. For the same reason, $\pi$ is rank-preserving and surjective. 

It remains to show that $\pi$ is weakly adjacency preserving. To this end, let 
\[ \Phi(\varepsilon) := \{\emptyset, \varepsilon, F_{0}(\varepsilon),\ldots,F_{k}(\varepsilon)\},\;\;\,  
\Phi'(\varepsilon') := \{\emptyset, \varepsilon', F'_{0}(\varepsilon'),\ldots,F'_{k}(\varepsilon')\}\] 
be flags of $n^\K$, where 
\[ \Phi:=\{F_{-1},F_0,\ldots,F_k\},\;\;\, \Phi':=\{F'_{-1},F'_0,\ldots,F'_k\}\] 
are flags of $\K$ and $\varepsilon,\varepsilon'$ are vectors in $N^{v(\K)}$. Suppose $\Phi(\varepsilon)$ and $\Phi'(\varepsilon')$ are $r$-adjacent for some $r\geq 0$. Then two possibilities can arise. 

If $r>0$, then $\varepsilon=\varepsilon'$ and $\Phi,\Phi'$ must be $(r-1)$-adjacent flags of $\K$. It follows that $\varepsilon_f =\varepsilon'_f$, and that $\Phi\gamma,\Phi'\gamma$ are $(r-1)$-adjacent flags of $\LL$ since $\gamma$ is adjacency preserving. Hence the image flags of $\Phi(\varepsilon)$ and $\Phi'(\varepsilon')$ under $\pi$, which are given by
\[ (\Phi(\varepsilon))\pi = 
\{\emptyset, \varepsilon_f,(F_{0}\gamma)(\varepsilon_f),\ldots,(F_{k}\gamma)(\varepsilon_f)\} \]
and 
\[ (\Phi'(\varepsilon'))\pi = 
\{\emptyset, \varepsilon'_f,(F'_{0}\gamma)(\varepsilon'_f),\ldots,(F'_{k}\gamma)(\varepsilon'_f)\} \]
respectively, are also $r$-adjacent. Thus, when $r>0$, the map $\pi$ takes $r$-adjacent flags of $n^\K$ to $r$-adjacent flags of $m^\LL$.

Now suppose $r=0$. Then $\Phi=\Phi'$ (but $\varepsilon\neq\varepsilon'$), since the faces $F_s$ and $F'_{s}$ of $\K$ must have the same vertex sets for each $s\geq 0$; bear in mind that $\K$ is vertex-describable. Moreover, since $F_0=F'_0$ and $r\neq 1$, we have $F_{0}(\varepsilon)=F'_{0}(\varepsilon')=F_{0}(\varepsilon')$, so $\varepsilon_{i}=\varepsilon'_{i}$ for each vertex $i$ of $\K$ distinct from $i_{0}:=F_0$; hence $\varepsilon$ and $\varepsilon'$ differ exactly in the position indexed by $i_0$. Then we certainly have $(F_{s}\gamma)(\varepsilon_f)=(F'_{s}\gamma)(\varepsilon'_f)$ for all $s\geq 0$. Hence $(\Phi(\varepsilon))\pi$ and $(\Phi'(\varepsilon'))\pi$ are either $0$-adjacent or identical. 

At this point we know that $\pi\!: n^\K \rightarrow m^\LL$ is weakly adjacency preserving, that is, $\pi$ is a weak covering. This proves the first part of the theorem. 

Moreover, since $\varepsilon$ and $\varepsilon'$ differ only in the position indexed by $i_0$, the corresponding shortened vectors $(\varepsilon_1,\ldots,\varepsilon_{v(\LL)})$ and $(\varepsilon'_1,\ldots,\varepsilon'_{v(\LL)})$ in $N^{v(\LL)}$ (underlying the definition of $\varepsilon_f$ and $\varepsilon'_{f}$) also differ only in the position indexed by $i_0$; note here that $\bar{i_0}=i_0$, by our labeling of the vertices in $\K$ and $\LL$. Hence the two vertices $\varepsilon_f = (\varepsilon_{1}f,\ldots,\varepsilon_{v(\LL)}f)$ and $\varepsilon'_f = (\varepsilon'_{1}f,\ldots,\varepsilon'_{v(\LL)}f)$ of $m^\LL$ in $(\Phi(\varepsilon))\pi$ and $(\Phi'(\varepsilon'))\pi$, respectively, either coincide or differ in a single position, indexed by $i_{0}$; the former occurs precisely when $\varepsilon_{i_0}f=\varepsilon'_{i_0}f$. Therefore, since $\varepsilon_{i_0}$ and $\varepsilon'_{i_0}$ can take any value in $N$, the mapping $\pi$ is a covering if and only if $f$ is a bijection. This completes the proof.
\end{proof}
\medskip

\noindent 
{\bf Coverings $n^\K \rightarrow m^\LL$ with $n^l\geq m$.}
\smallskip

The previous Theorem~\ref{covind} describes quite general circumstances under which coverings or weak coverings between power complexes $n^\K$ and $m^\LL$ are guaranteed to exist. Under the basic condition that $n\geq m$ this generally leads to a host of possible weak covering maps. Our next theorem deals with coverings or weak coverings between power complexes in situations where certain well-behaved (equifibered) coverings between the original complexes $\K$ and $\LL$ exist. This also permits many examples with $n\leq m$. 

To begin with, let $\K$ and $\LL$ be finite vertex-describable incidence complexes of rank~$k$, and let $V(\K):=\{1,\ldots,v(\K)\}$ and $V(\LL):=\{1,\ldots,v(\LL)\}$, respectively, denote their vertex sets. Suppose there is a covering $\gamma\!: \K \rightarrow \LL$ that is {\em equifibered\/} (with respect to the vertices), meaning that the {\em fibers\/} $\gamma^{-1}(j)$ of the vertices $j$ of $\LL$ under $\gamma$ all have the same cardinality, $l$ (say). In other words, the restriction of $\gamma$ to the vertex sets of $\K$ and $\LL$ is $l:1$, so in particular $v(\K)=l\cdot v(\LL)$.

Important examples of this kind are given by the regular $k$-polytopes $\K$ that are ({\em properly\/}) {\em centrally symmetric\/}, in the sense that the group $\Gamma(\K)$ contains a central involution that does not fix any of the vertices (see \cite[p. 255]{arp}); any such central involution $\alpha$ pairs up the vertices of $\K$ and naturally determines an equifibered covering $\K \rightarrow\K/\langle\alpha\rangle$, of $\K$ onto its quotient $\K/\langle\alpha\rangle$, satisfying the desired property with $l=2$.

Now returning to the general discussion, let $m,n\geq 2$ and $l$ be as above. Define $N:=\{1,\ldots,n\}$, $M:=\{1,\ldots,m\}$ and $L:=\{1,\ldots,l\}$. We wish to describe coverings $n^\K \rightarrow m^\LL$ that can balance the effect of $\gamma$ as an $l:1$ mapping on the vertex sets, by a controlled change in the base parameters from $n$ to $m$, provided $m\leq n^l$. To this end, we may assume that the vertices of $\K$ and $\LL$ are labeled in such a way that 
\[ \gamma^{-1}(j) = L_j := \{(j-1)l+1,\ldots,(j-1)l+l\}   \qquad (\mbox{for }j\in V(\LL)). \]
Thus for each~$j$, the map $\gamma$ takes the vertices of $\K$ in $L_j$ to the vertex $j$ of $\LL$. By a slight abuse of notation, we then can write a vector $\varepsilon = (\varepsilon_1,\ldots,\varepsilon_{v(\K)})$ in $N^{v(\K)}=(N^l)^{v(\LL)}$ in the form $\varepsilon = ({\hat{\varepsilon}}_1,\ldots,{\hat{\varepsilon}}_{v(\LL)})$, where
\[ {\hat{\varepsilon}}_j := (\varepsilon_{(j-1)l+1},\ldots,\varepsilon_{(j-1)l+l}) \] 
lies in $N^l$ for each $j=1,\ldots,v(\LL)$. 

Now suppose that, in addition to $\gamma$, we also have a surjective mapping $g\!: N^l \rightarrow M$ (and hence $m\leq n^l)$. Then $\gamma$ and $g$ determine a mapping
\begin{equation}
\label{gabet}
\begin{array}{rccc}
\pi^{\gamma,g}\!: & n^\K & \rightarrow & m^\LL\\[.02in]
& F(\varepsilon) & \rightarrow & (F\gamma)(\varepsilon_g),
\end{array}
\end{equation}
where again $F$ denotes a face of $\K$ and $\varepsilon$ a vector in $N^{v(\K)}$, and  
\[ \varepsilon_g := ({\hat{\varepsilon}}_1g,\ldots,{\hat{\varepsilon}}_{v(\LL)}g) \]
is the vector in $M^{v(\LL)}$ given by the images under $g$ of the components of $\varepsilon$ in its representation as $({\hat{\varepsilon}}_1,\ldots,{\hat{\varepsilon}}_{v(\LL)})$. We must prove that $\pi:=\pi^{\gamma,g}$ is a covering. 

First we must show that $\pi$ is well-defined. Suppose we have $F(\varepsilon)=F'(\varepsilon')$ in $n^\K$, where $\varepsilon = (\varepsilon_1,\ldots,\varepsilon_{v(\K)})$ and $\varepsilon' = (\varepsilon'_1,\ldots,\varepsilon'_{v(\K)})$ belong to $N^{v(\K)}$ and $F,F'$ are faces of $\K$. Then, as in the proof of the previous theorem, $F=F'$, $F\gamma=F'\gamma$, and $\varepsilon_{i}=\varepsilon'_{i}$ for $i\not\in V(F)=V(F')$. Now bear in mind that $\gamma$ is a covering. Hence, if $i\in V(F)$ then $(i)\gamma\in V(F\gamma)$; or equivalently, if $j\not\in V(F\gamma)$ then $V(F)\cap L_j =\emptyset$. It follows that, if $j\not\in V(F\gamma)$, then $\varepsilon_{i}=\varepsilon'_{i}$ for every $i$ in $L_j$, and therefore ${\hat{\varepsilon}}_j = {\hat{\varepsilon'}}_{\!\!j}$ and ${\hat{\varepsilon}}_jg = {\hat{\varepsilon'}}_{\!\!j}g$. Hence $\varepsilon_g$ and $\varepsilon'_g$ agree on every component represented by vertices of $\LL$ outside of $F\gamma=F'\gamma$. As the remaining components are allowed to take any value in $M$, we conclude that $(F\gamma)(\varepsilon_g)=(F'\gamma)(\varepsilon'_g)$. Thus $\pi$ is well-defined.

It is straightforward to verify that $\pi$ is a rank-preserving surjective homomorphism. To show that $\pi$ is also weakly adjacency preserving, let
\[ \Phi(\varepsilon) := \{\emptyset, \varepsilon, F_{0}(\varepsilon),\ldots,F_{k}(\varepsilon)\},\;\;\,  
\Phi'(\varepsilon') := \{\emptyset, \varepsilon', F'_{0}(\varepsilon'),\ldots,F'_{k}(\varepsilon')\}\] 
be $r$-adjacent flags of $n^\K$, where 
\[ \Phi:=\{F_{-1},F_0,\ldots,F_k\},\;\;\, \Phi':=\{F'_{-1},F'_0,\ldots,F'_k\}\] 
are flags of $n^\K$ and $\varepsilon,\varepsilon'$ lie in $N^{v(\K)}$. Again two possibilities arise. 
First, if $r>0$ then $\varepsilon=\varepsilon'$ and $\Phi,\Phi'$ are $(r-1)$-adjacent in $\K$. Hence $\varepsilon_g =\varepsilon'_g$ and $\Phi\gamma,\Phi'\gamma$ are $(r-1)$-adjacent in $\LL$. It follows that the two image flags under $\pi$,
\[\begin{array}{ccl}
(\Phi(\varepsilon))\pi \!\!&\!\!=\!\!&\!\!
\{\emptyset, \varepsilon_g,(F_{0}\gamma)(\varepsilon_g),\ldots,(F_{k}\gamma)(\varepsilon_g)\},\\[.06in]
(\Phi'(\varepsilon'))\pi \!\!&\!\!=\!\!&\!\!
\{\emptyset, \varepsilon'_g,(F'_{0}\gamma)(\varepsilon'_g),\ldots,(F'_{k}\gamma)(\varepsilon'_g)\}, 
\end{array}\]
are also $r$-adjacent. Now, if $r=0$ then $\Phi=\Phi'$ (but $\varepsilon\neq\varepsilon'$); in fact, $V(F_{s})=V(F'_{s})$ and hence $F_{s}=F'_{s}$ for each $s\geq 0$. When $s=0$ this gives $F_{0}(\varepsilon)=F'_{0}(\varepsilon')=F_{0}(\varepsilon')$ (since $r\neq 1$), and therefore $\varepsilon_{i}=\varepsilon'_{i}$ for each vertex $i$ of $\K$ distinct from $i_{0}:=F_0$; hence $\varepsilon$ and $\varepsilon'$ only differ in the position indexed by $i_{0}$. This already implies that $(F_{s}\gamma)(\varepsilon_g)=(F'_{s}\gamma)(\varepsilon'_g)$ for all $s\geq 0$, and hence that $(\Phi(\varepsilon))\pi$ and $(\Phi'(\varepsilon'))\pi$ are weakly $0$-adjacent flags of $m^\LL$. Thus $\pi$ is a weak covering. 

Moreover, since $\varepsilon_{i}=\varepsilon'_{i}$ if and only if $i\neq i_0$, we also know that ${\hat{\varepsilon}}_{\!j} ={\hat{\varepsilon'}}_{\!\!j}$ if and only if $j\neq j_{0}:=(i_0)\gamma$. Hence the two vertices 
\[ \varepsilon_g := ({\hat{\varepsilon}}_1g,\ldots,{\hat{\varepsilon}}_{v(\LL)}g), \;\; 
\varepsilon'_g := ({\hat{\varepsilon'}}_{\!\!1}g,\ldots,{\hat{\varepsilon'}}_{\!\!v(\LL)}g) \] 
of $m^\LL$ lying in $(\Phi(\varepsilon))\pi$ and $(\Phi'(\varepsilon'))\pi$, respectively, either coincide or differ in a single position, indexed by $j_{0}$; the former occurs precisely when ${\hat{\varepsilon}}_{\!j_0}g={\hat{\varepsilon'}}_{\!\!j_0}g$. Since ${\hat{\varepsilon}}_{\!j_0}$ can take any value in $N^l$, the mapping $\pi$ is a covering if and only if $g$ is a bijection.

Finally, suppose $g$ is a bijection, so in particular $m=n^l$. Then $n^\K$ and $m^\LL$ must have the same number of vertices,
\[ n^{v(\K)} = n^{l\cdot v(\LL)} = m^{v(\LL)}, \]
and hence $\pi$ must be a covering that is one-to-one on the vertices.
\medskip

In summary, we have established the following theorem. 

\begin{theorem}
\label{covindnew}
Let $\K$ and $\LL$ be finite vertex-describable incidence complexes of rank $k$, let $\gamma: \K \rightarrow \LL$ be a covering, and let $m,n\geq 2$ and $l\geq 1$. Suppose that $\gamma$ is equifibered with vertex fibers of cardinality $l$, and that $g\!: \{1,\ldots,n\}^l \rightarrow \{1,\ldots,m\}$ is a surjective mapping (and hence $m\leq n^l$). Then $\gamma$ and $g$ induce a weak covering $\pi^{\gamma,g}\!:\, n^\K \rightarrow m^\LL$ between the power complexes $n^\K$ and $m^\LL$. Moreover, $\pi^{\gamma,g}$ is a covering if and only if $g$ is a bijection (and $m=n^l$); in this case $\pi^{\gamma,g}$ is one-to-one on the vertices.
\end{theorem}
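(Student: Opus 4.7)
The plan is to verify in turn that $\pi := \pi^{\gamma,g}$ is well-defined, that it is a rank-preserving surjective homomorphism, that it is weakly adjacency-preserving, and finally to pinpoint precisely when it strengthens to an actual covering. The equifibered hypothesis invites a labeling of the vertices of $\K$ so that $\gamma^{-1}(j) = L_j := \{(j-1)l+1,\ldots,jl\}$ for each $j \in V(\LL)$; this produces the natural block decomposition $\varepsilon = (\hat{\varepsilon}_1,\ldots,\hat{\varepsilon}_{v(\LL)}) \in (N^l)^{v(\LL)}$ underlying the definition $\varepsilon_g := (\hat{\varepsilon}_1 g,\ldots,\hat{\varepsilon}_{v(\LL)} g)$. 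The core observation that drives the argument is that for any face $F$ of $\K$ the inclusion $V(F)\gamma \subseteq V(F\gamma)$ holds (since $\gamma$ is rank-preserving on faces and $V(F)$ consists of $0$-faces below $F$), so whenever $j \notin V(F\gamma)$ we have $L_j \cap V(F) = \emptyset$.

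For well-definedness, assume $F(\varepsilon) = F'(\varepsilon')$ in $n^\K$. Vertex-describability of $\K$ forces $F = F'$ and $\varepsilon_i = \varepsilon'_i$ for every $i \notin V(F)$, and the core observation above then shows that for every $j \notin V(F\gamma)$ the full block $\hat{\varepsilon}_j$ equals $\hat{\varepsilon}'_j$, so in particular $\hat{\varepsilon}_j g = \hat{\varepsilon}'_j g$. As the remaining blocks index vertices of $F\gamma$ where the corresponding components of faces in $m^\LL$ are free to vary, this yields $(F\gamma)(\varepsilon_g) = (F'\gamma)(\varepsilon'_g)$ in $m^\LL$. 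Rank-preservation and surjectivity of $\pi$ pass transparently from $\gamma$ and $g$, and the homomorphism property follows by essentially the same bookkeeping applied to $F(\varepsilon) \subseteq F'(\varepsilon')$.

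For weak adjacency-preservation, I would split on the adjacency index. Given $r$-adjacent flags $\Phi(\varepsilon), \Phi'(\varepsilon')$ of $n^\K$: if $r \geq 1$ they share $\varepsilon$ and the base flags $\Phi, \Phi'$ are $(r-1)$-adjacent in $\K$, so because $\gamma$ is a rap-map the images are $r$-adjacent in $m^\LL$; if $r = 0$, vertex-describability forces $\Phi = \Phi'$ with $\varepsilon, \varepsilon'$ differing only at $i_0 := F_0$, whence the image flags agree on every component of rank at least $1$, while their $0$-components either coincide or are $0$-adjacent in the single position $j_0 := i_0 \gamma$. Strengthening to a genuine covering is then a direct inspection of this last case: the image flags fail to be distinct precisely when $\hat{\varepsilon}_{j_0} g = \hat{\varepsilon}'_{j_0} g$ for blocks differing in one coordinate, and allowing $\varepsilon, \varepsilon', i_0$ and the base flag to vary, this condition collapses to $g$ being injective, hence (as $g$ is surjective) a bijection with $m = n^l$. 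The resulting identity $n^{v(\K)} = n^{l v(\LL)} = m^{v(\LL)}$ then forces $\pi$ to be injective on vertices. I expect the main obstacle to be the well-definedness step, where the fiber structure $L_j$, the rank-preservation of $\gamma$, and vertex-describability of $\K$ must all be combined carefully; the equifibered hypothesis is essential here, since it is what makes the block sizes uniformly $l$ and hence makes $\varepsilon_g$ well-defined as a vector in $M^{v(\LL)}$.
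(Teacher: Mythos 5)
Your proposal follows essentially the same route as the paper's own argument: the same block decomposition $\varepsilon=(\hat{\varepsilon}_1,\ldots,\hat{\varepsilon}_{v(\LL)})$ induced by the fibers $L_j$, the same well-definedness argument via the observation that $j\notin V(F\gamma)$ implies $L_j\cap V(F)=\emptyset$, the same case split on the adjacency index $r$, and the same final analysis of when $0$-adjacency is preserved. The one place where you are terse --- inferring that injectivity of $g$ on pairs of vectors of $N^l$ differing in a single coordinate already forces $g$ to be injective outright --- is asserted in exactly the same way in the paper's proof, so your write-up is a faithful match.
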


As an example consider finite regular polygons $\K = \{2p\}$ and $\LL = \{p\}$, with $2p$ or $p$ vertices, respectively, for some $p\geq 2$. The central symmetry of $\K$ gives an obvious equifibered covering $\gamma\!:\K\rightarrow\LL$ between $\K$ and $\LL$ with fibers of size $l=2$. Now choose $m=n^2$ and pick any bijection $g\!:\{1,\ldots,n\}^2 \rightarrow\{1,\ldots,n^2\}$. Then 
\[ \pi^{\gamma,g}\!:\, n^{\{2p\}} \rightarrow (n^2)^{\{p\}} \] 
is a covering. Either complex has $n^{2p}$ vertices, and $\pi^{\gamma,g}$ is one-to-one on the vertices. For example, when $n=2$ we obtain a covering
\[ \pi^{\gamma,g}\!:\, 2^{\{2p\}} \rightarrow 4^{\{p\}}. \] 
Here $2^{\{2p\}}$ is Coxeter's regular map $\{4,2p \!\mid\!4^{\lfloor p\rfloor -1}\}$ described in Section~\ref{app}.

\bibliographystyle{amsplain}

\end{document}